\documentclass[11pt]{amsart}
\author{Paul Pollack}
\address{University of Georgia\\Department of Mathematics\\Athens, Georgia 30602\\USA}
\email{pollack@uga.edu}

\author{Carlo Sanna}
\address{Universit\`a degli Studi di Torino\\Department of Mathematics\\Turin, Italy}
\email{carlo.sanna.dev@gmail.com}

\keywords{M\"{o}bius inversion, M\"{o}bius transform, uncertainty principle, sets of multiples}
\subjclass[2000]{Primary: 11A25, Secondary: 11N37}

\title{Uncertainty principles connected with the M\"{o}bius inversion formula}
\usepackage{amsmath,amssymb,amsthm,geometry,lgreek}
\geometry{left=1.15in, right=1.15in, top=.72in, bottom=.72in}
\DeclareMathAlphabet{\curly}{U}{rsfs}{m}{n}
\newtheorem{thm}{Theorem}
\newtheorem{prop}[thm]{Proposition}

\newtheorem{lem}[thm]{Lemma}
\newtheorem*{question}{Question}

\theoremstyle{remark}
\newtheorem*{rmk}{Remark}
\begin{document}
\renewcommand{\labelenumi}{(\roman{enumi})}
\def\A{\curly{A}}
\def\sumprime{\sideset{}{^{'}}{\sum}}
\def\lcm{\mathrm{lcm}}
\def\M{\curly{M}}
\def\N{\mathbf{N}}
\def\Kk{\curly{K}}
\def\V{\curly{V}}
\def\Uu{\curly{U}}
\def\1{\mathbf{1}}
\def\Q{\mathbf{Q}}
\def\Qq{\curly{Q}}
\def\Z{\mathbf{Z}}
\def\R{\mathbf{R}}
\def\Li{\mathrm{Li}}
\def\pp{\mathfrak{p}}
\def\Nm{\mathrm{N}}
\def\Tt{\curly{T}}
\def\Ss{\curly{S}}
\def\bad{\curly{P}}
\def\good{\curly{Q}}
\def\Pp{\curly{P}}
\def\Gal{\mathrm{Gal}}
\def\e{\mathrm{e}}
\def\ds{\mathbf{d}}
\def\supp{\mathrm{supp}}
\maketitle
\begin{abstract} We say that two arithmetic functions $f$ and $g$ form a \emph{M\"{o}bius pair} if $f(n) = \sum_{d \mid n} g(d)$ for all natural numbers $n$. In that case, $g$ can be expressed in terms of $f$ by the familiar M\"{o}bius inversion formula of elementary number theory. In a previous paper, the first-named author showed that if the members $f$ and $g$ of a M\"{o}bius pair are both finitely supported, then both functions vanish identically. Here we prove two significantly stronger versions of this uncertainty principle. A corollary is that in a nonzero M\"{o}bius pair, one cannot have both $\sum_{f(n) \neq 0}\frac{1}{n} <\infty$ and $\sum_{g(n) \neq 0}\frac{1}{n} <\infty$.
\end{abstract}

\section{Introduction}

Let $f$ be an arithmetic function, i.e., a function from the set of natural numbers (positive integers) to the complex numbers. The Dirichlet transform $\hat{f}$ and the M\"obius transform $\check{f}$ of $f$ are defined by the equations
\[
\hat{f}(n) := \sum_{d \mid n} f(d)\quad \text{and}\quad \check{f}(n) := \sum_{d \mid n} \mu(n/d) f(d).
\]
In a first course in number theory, one learns (M\"{o}bius inversion) that the Dirichlet and M\"{o}bius transforms are inverses of each other: That is, \[ f = \check{\hat{f}} = \hat{\check{f}} \]
for every $f$. In a short note \cite{pollack11}, the first author gave a simple proof of the following \emph{uncertainty principle for the M\"{o}bius transform}: If $f$ is an arithmetic function not identically zero, then the support of $f$ and the support of $\check{f}$ cannot both be finite. (Here the \emph{support} of an arithmetic function $h$ refers to the set $\{n: h(n)\neq 0\}$.) In this note, we present two substantial quantitative strengthenings of this result.

Call a pair of functions $(f,g)$ a M\"{o}bius pair if $f = \hat{g}$ (equivalently, if $g = \check{f}$). For the sake of typography, we state our results in terms of $f$ and $g$ rather than $f$ and $\check{f}$.

\begin{thm}\label{thm:main1} Suppose that $(f,g)$ is a nonzero M\"{o}bius pair. If \[ \sum_{n \in \supp(g)}\frac{1}{n} < \infty,\] then $\supp(f)$ possesses a positive asymptotic density. The same result holds with the roles of $f$ and $g$ reversed.
\end{thm}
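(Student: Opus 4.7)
My plan is to exhibit a positive-density subset of $\supp(f)$ by working inside multiples of $s_0 := \min \supp(g)$. Parameterizing each $d \in \supp(g)$ via $e := \gcd(d, s_0)$ and $k := d/e$ turns the condition $d \mid s_0 m$ into $k \mid m$, which yields the identity
\[
f(s_0 m) = \sum_{k \mid m} G(k), \qquad G(k) := \sum_{\substack{d \in \supp(g) \\ d/\gcd(d, s_0) = k}} g(d).
\]
By the minimality of $s_0$, the only $d \in \supp(g)$ contributing to $G(1)$ is $d = s_0$ itself, so $G(1) = g(s_0) \neq 0$. Setting $B := \supp(G) \setminus \{1\}$, one has
\[
\sum_{b \in B} \frac{1}{b} \leq \sum_{\substack{d \in \supp(g) \\ d \neq s_0}} \frac{\gcd(d, s_0)}{d} \leq s_0 \sum_{\substack{d \in \supp(g) \\ d \neq s_0}} \frac{1}{d} < \infty
\]
by hypothesis. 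For every $m \notin \M(B)$, the only $k \in \supp(G)$ dividing $m$ is $k = 1$, so $f(s_0 m) = G(1) = g(s_0) \neq 0$. Thus $\underline{d}(\supp f) \geq (1/s_0)\cdot d(\N \setminus \M(B))$.

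The main obstacle is to show $d(\N \setminus \M(B)) > 0$. I would do this with a Harris/FKG positive-correlation argument: for $n$ uniform modulo $L_K := \lcm(B \cap [1, K])$, the Chinese Remainder Theorem decomposes $n$ into independent components $n \bmod p^{a_p}$, and each event $\{b \nmid n\}$ is a decreasing function of these components in the divisibility order. Harris' inequality then gives $d(\N \setminus \M(B \cap [1, K])) \geq \prod_{b \in B,\, b \leq K}(1 - 1/b)$, and letting $K \to \infty$,
\[
d(\N \setminus \M(B)) \geq \prod_{b \in B}(1 - 1/b) > 0,
\]
the positivity being equivalent to $\sum_{b \in B} 1/b < \infty$.

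Existence (as opposed to a positive lower bound) of the density of $\supp f$ follows from a standard Besicovitch-type approximation. With $A_K := \supp(g) \cap [1, K]$ and $g_K := g \cdot \mathbf{1}_{A_K}$, the truncated transform $\widehat{g_K}$ depends only on $\gcd(n, \lcm A_K)$, so it has an exact asymptotic density $\alpha_K$. Moreover $\widehat{g_K}$ agrees with $f$ outside the set $\M(\supp(g) \setminus A_K)$, whose density is at most $\sum_{d \in \supp(g),\, d > K} 1/d \to 0$. This pinches $\overline{d}(\supp f) = \underline{d}(\supp f) = \lim_K \alpha_K$, and by the previous paragraph this common value is positive.

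The reversed-roles statement is handled by the analogous scheme with $t_0 := \min \supp(f)$ and the Möbius inversion $g(n) = \sum_{d \mid n} \mu(n/d) f(d)$. Restricting $m$ to be squarefree and outside $\M(B')$ (for the analog of $B$ built from $\supp(f)$) yields $g(t_0 m) = \mu(m) f(t_0) \neq 0$; the joint condition $m \notin \M(\{p^2 : p \text{ prime}\} \cup B')$ applies to a Besicovitch set, and the same Harris/FKG bound delivers positive density.
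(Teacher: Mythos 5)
Your approach is genuinely different from the paper's in the direction where $\supp(g)$ is thin. The paper partitions $\supp(f)$ into classes according to the set $\Ss$ of divisors from $\supp(g)$, obtains the density of each class from Lemma~\ref{thin_subset_lemma}, and concludes by countable additivity (Lemma~\ref{lem:countable}); you instead truncate $g$ at height $K$ and approximate $\supp(f)$ by the periodic set $\supp(\widehat{g_K})$, which is a clean and more elementary route to existence here. For positivity, where the paper quotes Heilbronn--Rohrbach (Lemma~\ref{lem:basicdensity}), you rederive that bound via Harris/FKG positive correlation; this is a legitimate self-contained alternative, although the passage from $B\cap[1,K]$ to $B$ deserves a line: $\N\setminus\M(B\cap[1,K])$ and $\N\setminus\M(B)$ differ by a set of upper density at most $\sum_{b\in B,\,b>K}1/b$, which tends to $0$.

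The genuine gap is in the reversed-roles existence claim. You say the ``analogous scheme'' applies, but the pivot of your Besicovitch approximation --- that $\widehat{g_K}(n)$ depends only on $\gcd(n,\lcm A_K)$, hence has periodic support --- breaks on the $g$ side. The truncation there, $n\mapsto\sum_{d\mid n,\,d\in \supp(f)\cap[1,K]}\mu(n/d)f(d)$, involves the factors $\mu(n/d)$, which depend on whether $n/d$ is squarefree and are not determined by $n$ modulo any fixed modulus. To make your scheme go through you would need to know that the set of $n$ with a prescribed set $\Ss$ of divisors from $\supp(f)\cap[1,K]$ \emph{and} a prescribed subset $\Tt\subset\Ss$ of those $d$ for which $n/d$ is squarefree has an asymptotic density; this is exactly the full strength of the paper's Lemma~\ref{thin_subset_lemma}, whose proof uses a set-of-multiples argument with the auxiliary thin set $\{h^2/\gcd(h^2,L/e):h\geq 2\}$. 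Your periodicity reasoning handles only the $\Ss$-condition, not the $\Tt$-condition, so the existence of $\ds(\supp(g))$ is left unjustified. (Your positivity argument for $\supp(g)$, via $m\notin\M(\{p^2:p\text{ prime}\}\cup B')$, is fine.)
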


Our second result is similar, but now weighted by the absolute values of $f$ and $g$. Recall that the mean value of an arithmetic function $h$ is the limit, as $x\to\infty$, of the finite averages $\frac{1}{x}\sum_{n \leq x}h(n)$.

\begin{thm}\label{thm:main2} Suppose that $(f,g)$ is a nonzero M\"{o}bius pair. If \[ \sum_{n =1}^{\infty}\frac{|g(n)|}{n} < \infty,\] then $|f|$ possesses a nonzero mean value. The same result holds with the roles of $f$ and $g$ reversed.
\end{thm}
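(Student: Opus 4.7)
My plan for the main implication has two stages: (i) show that $|f|$ has a mean value $M$; and (ii) show that $M = 0$ forces $g \equiv 0$, contradicting non-triviality. I return to the reversed statement at the end.

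For (i), I would approximate $f$ by the truncations $F_k(n) := \sum_{d \mid n,\, d \leq k} g(d)$. Because $F_k(n)$ depends on $n$ only through which integers $\leq k$ divide $n$, $F_k$ is periodic modulo $L_k := \lcm(1, \ldots, k)$, so $|F_k|$ has the periodic mean $M_k := L_k^{-1}\sum_{r=1}^{L_k}|F_k(r)|$. The tail estimate
\[
\frac{1}{x}\sum_{n \leq x}\bigl|\,|f(n)| - |F_k(n)|\,\bigr| \;\leq\; \frac{1}{x}\sum_{n \leq x}\sum_{d \mid n,\, d > k}|g(d)| \;\leq\; \sum_{d > k}\frac{|g(d)|}{d}
\]
tends to $0$ as $k \to \infty$ by hypothesis, so $(M_k)$ is Cauchy and $|f|$ acquires the mean $M := \lim_k M_k$.

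For (ii), I would suppose $M = 0$. For every $m \geq 1$, nonnegativity gives $\frac{1}{x}\sum_{n \leq x,\, m \mid n}|f(n)| \to 0$, hence also $\frac{1}{x}\sum_{n \leq x,\, m \mid n} f(n) \to 0$. Expanding using $f = \hat g$ and applying dominated convergence (with $|g(d)|/d$ as majorant) turns this into
\[
\sum_d \frac{g(d)}{\lcm(m,d)} \;=\; 0 \qquad \text{for every } m \geq 1.
\]
Using $\gcd(m,d) = \sum_{e \mid \gcd(m,d)}\varphi(e)$, I would rewrite the left side as $m^{-1}\sum_{e \mid m}\varphi(e)\,G(e)$, where $G(e) := \sum_{d :\, e \mid d} g(d)/d$; Möbius inversion on the divisor poset of $m$ then yields $G(m) = 0$ for every $m$. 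A finite inclusion--exclusion over primes $p \leq N$ converts this into
\[
\frac{g(m)}{m} \;=\; -\sum_{\substack{k > 1 \\ \gcd(k,\, \prod_{p \leq N} p)=1}} \frac{g(mk)}{mk},
\]
whose right side is dominated by the convergent tail $\sum_{n > N}|g(n)|/n$ and so tends to $0$ as $N \to \infty$. Hence $g(m) = 0$ for every $m$, a contradiction.

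For the reversed statement the truncation strategy of (i) fails to be periodic because $g = \check f$ involves the $n$-dependent factor $\mu(n/d)$. I would instead restrict to squarefree $n$, where $\mu(n/d) = \mu(n)\mu(d)$ gives the clean identity $g(n) = \mu(n)\tilde f(n)$ with $\tilde f := 1 * (\mu f)$; since $\sum|\mu(n)f(n)|/n \leq \sum|f(n)|/n < \infty$, the first direction applies to the Möbius pair $(\tilde f,\mu f)$ (after treating the degenerate case $\mu f \equiv 0$, where $f$ is supported on non-squarefree integers, by a similar argument after factoring out the squareful part of the support), and a density computation extracts the positive mean of $|g|$ from the squarefree-restricted mean of $|\tilde f|$. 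The step I expect to be most delicate throughout is the algebraic conclusion of stage (ii): converting the vanishing of all the twisted mean values $\sum_d g(d)/\lcm(m,d)$ into the pointwise statement $g \equiv 0$ requires an infinite Möbius-type inversion whose convergence hinges precisely on the absolute-convergence hypothesis.
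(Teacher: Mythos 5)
Your treatment of the forward direction is correct and, in places, genuinely different from the paper's. For stage (i), the paper also approximates $f$ by truncations $f_y$, but establishes the mean of $|f_y|$ via a sieving lemma (Lemma~\ref{thin_subset_lemma}) that shows each set $\{n : \{d\leq y : d\mid n\} = \Ss\}$ has a density; your observation that $F_k$ is literally periodic mod $\lcm(1,\ldots,k)$ is a cleaner route to the same conclusion. For stage (ii), the paper gives a direct sieve lower bound for $\liminf\frac{1}{x}\sum_{n\le x}|f(n)|$ by restricting to $n=dm$ with $m$ free of small primes; you instead argue by contradiction, deducing from mean zero that $\sum_d g(d)/\lcm(m,d)=0$ for all $m$, passing via $\gcd(m,d)=\sum_{e\mid\gcd(m,d)}\varphi(e)$ and M\"obius inversion to $G(m):=\sum_{d:\,m\mid d}g(d)/d=0$, and then using a finite inclusion--exclusion over small primes plus the absolute-convergence hypothesis to force $g\equiv 0$. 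I checked all the interchanges of summation (they are justified by dominated convergence against $|g(d)|/d$), and this argument is correct; it is a nice alternative to the paper's approach.

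However, the reversed direction contains a genuine gap. You replace $g$ on squarefree $n$ by $\mu(n)\tilde f(n)$, where $\tilde f = 1*(\mu f)$, apply the forward direction to the pair $(\tilde f, \mu f)$, and then claim a ``density computation extracts the positive mean of $|g|$ from the squarefree-restricted mean of $|\tilde f|$.'' But $|g(n)|=|\tilde f(n)|$ \emph{only} for squarefree $n$; on non-squarefree $n$ the two functions are unrelated. At best your reduction controls $\frac{1}{x}\sum_{n\le x,\,n\text{ sqfree}}|g(n)|$, which would give $\liminf\frac{1}{x}\sum_{n\le x}|g(n)|>0$ but says nothing about whether $\lim_{x\to\infty}\frac{1}{x}\sum_{n\le x}|g(n)|$ \emph{exists}, since the contribution of non-squarefree $n$ is uncontrolled. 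The theorem asserts that $|g|$ possesses a mean value, not merely a positive liminf, so existence must be established for the full sum. This is exactly where your periodicity trick breaks down: the truncation $g_y(n)=\sum_{d\mid n,\,d\le y}\mu(n/d)f(d)$ is not periodic in $n$, and showing its absolute value has a mean is precisely the content of the paper's Lemma~\ref{thin_subset_lemma} with the second prescribed set $\Tt$ (the divisors $d$ with $n/d$ squarefree); there is no obvious elementary substitute. A secondary issue is the handling of the degenerate case $\mu f\equiv 0$: your parenthetical remark about ``factoring out the squareful part of the support'' is too vague to evaluate, and in that case the pair $(\tilde f,\mu f)$ is zero, so the forward direction gives no information at all about $|g|$, even on squarefree $n$.
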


\begin{rmk} Our Theorem \ref{thm:main2} may be compared with a classical theorem of Wintner (see \cite[p. 20]{wintner43}), according to which $\sum_{n=1}^{\infty} \frac{|g(n)|}{n} < \infty$ implies that $f$ has the (possibly vanishing) mean value $\sum_{n=1}^{\infty} \frac{g(n)}{n}$.
\end{rmk}

For the rest of this paper, call a set $\A$ of natural numbers \emph{thin} if $\sum_{a \in \A}\frac{1}{a} < \infty$. One can conclude from Theorem \ref{thm:main1} and partial summation (cf. the proof of Lemma \ref{lem:kronecker} below) that in a nonzero M\"{o}bius pair $(f,g)$,  $\supp(f)$ and $\supp(g)$ cannot both be thin. One might wonder why we bother with thin sets instead of dealing directly with natural density. The answer is given in our final theorem, which shows that the asymptotic densities of $\supp(f)$ and $\supp(g)$ can be arbitrarily prescribed. Our notation for the density of a set $\A\subset\N$ is $\ds(\A)$.

\begin{thm}\label{thm:prescribed} For any $\alpha, \beta \in [0,1]$, one can find a nonzero M\"{o}bius pair $(f,g)$ for which $\ds(\supp(f)) = \alpha$ and $\ds(\supp(g)) = \beta$. Moreover, $f$ and $g$ can be chosen as multiplicative functions.
\end{thm}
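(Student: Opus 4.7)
The plan is to construct $(f,g)$ as explicit multiplicative functions, exploiting the fact that any prescription of the values $g(p^k)$ on prime powers (with $g(1) = 1$) determines $g$ multiplicatively, and that its M\"obius partner $f = \hat g$ is then multiplicative with $f(p^k) = \sum_{j=0}^{k} g(p^j)$. I will use three profiles at prime columns, indexed by two disjoint sets of primes $\mathcal{P}$ and $\mathcal{Q}$ to be chosen later. For $p \in \mathcal{P}$, put $g(p) = -1$ and $g(p^k) = 1$ for $k \geq 2$, giving $f(p^k) = k-1$ for $k \geq 1$: here $g$ is nonvanishing on every power of $p$ while $f(p^k) = 0$ exactly when $k = 1$. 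For $p \in \mathcal{Q}$, take the symmetric choice $g(p) = 0$ and $g(p^k) = 1$ for $k \geq 2$, giving $f(p^k) = k$ for $k \geq 1$: here $f$ never vanishes while $g(p^k) = 0$ exactly when $k = 1$. For all remaining primes, set $g(p^k) = 2^k$, so that $f(p^k) = 2^{k+1}-1$ and neither function vanishes. Since $f(1) = g(1) = 1$, the pair $(f,g)$ is a nonzero M\"obius pair.

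By multiplicativity the supports factor through the local conditions:
\[
\supp(f) = \{n \geq 1 : p \parallel n \Rightarrow p \notin \mathcal{P}\}, \qquad \supp(g) = \{n \geq 1 : p \parallel n \Rightarrow p \notin \mathcal{Q}\}.
\]
A standard sieve computation shows that for any set of primes $\mathcal{R}$,
\[
\ds\bigl(\{n : p \parallel n \Rightarrow p \notin \mathcal{R}\}\bigr) = \prod_{p \in \mathcal{R}}\Bigl(1 - \tfrac{1}{p} + \tfrac{1}{p^2}\Bigr).
\]
When $\sum_{p \in \mathcal{R}} 1/p < \infty$ this follows from inclusion-exclusion over the independent events $\{n : p \parallel n\}$ for $p \in \mathcal{R}$, together with a tail estimate controlled by $\sum_{p > N,\, p \in \mathcal{R}} 1/p$; when the sum diverges the product equals $0$ and the support density is majorized by the finite-truncation densities, which tend to $0$.

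It remains to realize any $(\alpha, \beta) \in [0,1]^2$. Partition the primes into two disjoint subsets $\Pi_1, \Pi_2$, each with divergent sum of reciprocals (say by index parity), and select $\mathcal{P} \subseteq \Pi_1$ and $\mathcal{Q} \subseteq \Pi_2$ by a greedy procedure. Given a target $\gamma \in [0,1]$ and an enumeration $q_1 < q_2 < \cdots$ of the ambient set, maintain a running product $P$ starting at $1$ and include $q_n$ in the subset iff $P \cdot (1 - 1/q_n + 1/q_n^2) \geq \gamma$, updating $P$ when included. Then $P$ is monotone non-increasing and always $\geq \gamma$; each omission satisfies $P < \gamma/(1 - 1/q_n + 1/q_n^2) = \gamma + O(\gamma/q_n)$; and there must be infinitely many omissions unless $\gamma = 0$ (in which case we include everything and divergence drives $P \to 0$). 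Hence $P \to \gamma$, and the two resulting subsets produce the required densities. The main conceptual obstacle is decoupling the supports of $f$ and $g$, since the M\"obius relation ties $f(p^k)$ and $g(p^k)$ together within each prime column; this is resolved by the two asymmetric profiles above, deployed on disjoint sets of primes.
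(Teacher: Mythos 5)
Your proof is correct, and it takes a genuinely cleaner route than the paper's. The paper splits into two cases according to whether $\beta < 1$ or $\beta = 1$: in the first case it chooses $g$ so that $f$ vanishes along \emph{every} power of $p \in \Pp$ (giving $\ds(\supp(f)) = \prod_{p\in\Pp}(1-1/p)$) and so that $g$ vanishes along every power of $p\in\Qq$ and at $p^e$, $e\geq 2$, for $p\in\Pp$ (giving $\ds(\supp(g)) = \prod_{p\in\Qq}(1-1/p)\,\prod_{p\in\Pp}(1-1/p^2)$). That coupling forces a side constraint ($\prod_{p\in\Pp}(1-1/p^2) \geq \beta$, arranged by taking $\Pp$ to consist of large primes) and leaves $\beta=1$ as a separate case, where the $\prod(1-1/p^2)$ factor cannot be pushed to $1$ once $\Pp$ is nonempty. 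Your symmetric local profiles --- $g(p)=-1$, $g(p^k)=1$ ($k\geq 2$) on $\mathcal{P}$, and $g(p)=0$, $g(p^k)=1$ ($k\geq2$) on $\mathcal{Q}$ --- make each of $f$ and $g$ vanish exactly on the event $p\parallel n$ for $p$ in the respective prime set, so both support densities take the identical form $\prod(1-1/p+1/p^2)$ over disjoint sets and are completely decoupled. This eliminates the case split and the auxiliary constraint, and your $\mathcal{P}$-column is precisely the profile the paper reserves for its $\beta=1$ case, now deployed uniformly. You also spell out the greedy selection achieving any target in $[0,1]$, which the paper takes for granted. Two small remarks: the choice $g(p^k)=2^k$ off $\mathcal{P}\cup\mathcal{Q}$ is harmless overkill (the constant $1$, giving $f(p^k)=k+1$, works just as well, and is what the paper uses), and the sieve identity $\ds(\{n: p\parallel n \Rightarrow p\notin\mathcal{R}\}) = \prod_{p\in\mathcal{R}}(1-1/p+1/p^2)$ that you invoke is exactly the one the paper cites from \cite[Lemma 8.13]{pollack09}.
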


\subsection*{Notation and conventions} Throughout, the letter $p$ is reserved for a prime variable. We continue to use $\ds(\A)$ for the natural density of $\A$, defined as the limit as $x\to\infty$ of $\frac{1}{x}\#\{n\in \A: n \leq x\}$. The lower density $\underline{\ds}(\A)$ and upper density $\overline{\ds}(\A)$ are defined analogously, with $\liminf$ and $\limsup$ replacing $\lim$. We use $O$ and $o$-notation with its standard meaning. We write $p \parallel n$ when $p\mid n$ and $p^2\nmid n$. Whenever we refer to arithmetic functions $f$ and $g$, it is to be assumed that $(f,g)$ is a M\"{o}bius pair.

\section{Preliminaries}
In this section, we collect a number of lemmas needed in later arguments. The first is a well-known sufficient condition for asymptotic density to be countably additive.

\begin{lem}\label{lem:countable} Let $\A_1, \A_2, \A_3, \dots$ be a sequence of disjoint sets of natural numbers, each of which possesses an asymptotic density. Suppose that $\overline{\ds}\left(\cup_{i>k}^{\infty} \A_i\right) \to 0$ as $k\to\infty$.
Then the union of the $\A_i$ has a natural density, and in fact
\[ \ds(\cup_{i=1}^{\infty} \A_i) = \sum_{i=1}^{\infty} \ds(\A_i). \]
\end{lem}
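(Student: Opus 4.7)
The plan is to sandwich the upper and lower densities of $\A := \cup_{i=1}^{\infty} \A_i$ between the partial sums $S_k := \sum_{i=1}^{k} \ds(\A_i)$ and $S_k$ plus the tail term $\overline{\ds}(\cup_{i>k}\A_i)$, and then let $k \to \infty$.

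First I would record the routine finite case: if $\curly{B}_1, \dots, \curly{B}_k$ are finitely many disjoint sets each possessing a density, then their union also has a density, equal to $\sum_{i=1}^{k}\ds(\curly{B}_i)$. This follows directly from $\#\{n \leq x : n \in \cup_i \curly{B}_i\} = \sum_{i=1}^{k} \#\{n \leq x : n \in \curly{B}_i\}$, dividing by $x$, and passing to the limit. Applied to our situation, this gives $\ds(\cup_{i \leq k} \A_i) = S_k$ for every $k$.

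Next, for each fixed $k$, I would use the decomposition $\A = (\cup_{i \leq k} \A_i) \sqcup (\cup_{i > k} \A_i)$ together with the general inequalities for lower and upper density of disjoint unions. Specifically,
\[
S_k = \underline{\ds}\bigl(\cup_{i\leq k}\A_i\bigr) \leq \underline{\ds}(\A) \leq \overline{\ds}(\A) \leq \overline{\ds}\bigl(\cup_{i\leq k}\A_i\bigr) + \overline{\ds}\bigl(\cup_{i>k}\A_i\bigr) = S_k + \overline{\ds}\bigl(\cup_{i>k}\A_i\bigr).
\]
The partial sums $S_k$ are nondecreasing and bounded above by $\overline{\ds}(\A) \leq 1$, so they converge to some limit $S \leq 1$.

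Letting $k \to \infty$ in the sandwich and invoking the hypothesis that the tail upper density tends to $0$, I would conclude $S \leq \underline{\ds}(\A) \leq \overline{\ds}(\A) \leq S$, which forces $\ds(\A) = S = \sum_{i=1}^{\infty} \ds(\A_i)$. There is no real obstacle here; the only point worth checking carefully is the subadditivity step $\overline{\ds}(X \cup Y) \leq \overline{\ds}(X) + \overline{\ds}(Y)$ used above, which holds because $\#\{n\leq x : n \in X \cup Y\} \leq \#\{n\leq x : n \in X\} + \#\{n\leq x : n \in Y\}$ and $\limsup$ is subadditive.
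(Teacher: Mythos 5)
Your argument is correct and is essentially identical to the paper's proof: both establish the same two-sided estimate $S_k \leq \underline{\ds}(\A) \leq \overline{\ds}(\A) \leq S_k + \overline{\ds}(\cup_{i>k}\A_i)$ and let $k\to\infty$. You simply spell out the finite additivity and subadditivity steps that the paper leaves implicit.
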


\begin{proof} The proof is easy and so we include it here. Writing $\cup_{i=1}^{\infty} \A_i = (\cup_{i=1}^{k} \A_i) \bigcup (\cup_{i > k} \A_i)$, we find that for each $k$,
\[ \sum_{i=1}^{k} \ds{(\A_i)} \leq \underline{\ds}(\A) \leq \overline{\ds}{(\A)} \leq \left(\sum_{i=1}^{k} \ds{(\A_i)}\right) + \overline{\ds}(\cup_{i>k}\A_i).\]
Now letting $k\to\infty$ gives the lemma.
\end{proof}

If $\A$ is any set of natural numbers, we define its \emph{set of multiples} $\M(\A)$ as the collection of positive integers possessing at least one divisor from $\A$. In other words, $\M(\A):= \{an: a \in \A, n \in \N\}$. The next lemma collects two basic results on natural densities of sets of multiples.

\begin{lem}\label{lem:basicdensity} If $\A$ is thin, then $\M(\A)$ possesses an asymptotic density. Moreover, if $1 \not\in \A$, then $\ds(\M(\A)) < 1$.
\end{lem}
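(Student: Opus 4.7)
The plan is to handle the two claims in turn.

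For the existence of a density of $\M(\A)$, I would enumerate $\A = \{a_1, a_2, \dots\}$ and peel off layers: define $\A_i := a_i\N \setminus \bigcup_{j<i} a_j\N$. The $\A_i$ are pairwise disjoint with $\bigcup_i \A_i = \M(\A)$, and each $\A_i$ is a Boolean combination of finitely many arithmetic progressions, hence possesses a natural density by elementary inclusion-exclusion. Since $\A_i \subseteq a_i\N$,
\[
\overline{\ds}\!\left(\bigcup_{i>k}\A_i\right) \;\leq\; \sum_{i>k}\frac{1}{a_i} \;\longrightarrow\; 0 \qquad (k\to\infty),
\]
because $\A$ is thin. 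Lemma \ref{lem:countable} then delivers the density of $\M(\A)$.

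For the strict inequality $\ds(\M(\A))<1$, I would produce an explicit arithmetic progression of positive density whose intersection with $\M(\A)$ is small. Using thinness, pick $k$ so large that $S := \sum_{i>k} 1/a_i < 1$, and set $L := \lcm(a_1, \dots, a_k)$. Since $1 \notin \A$ forces each $a_i \geq 2$, no element of the residue class $\{n : n \equiv 1 \pmod{L}\}$ is divisible by any of $a_1, \dots, a_k$; thus that class, of density $1/L$, meets $\M(\A)$ only through the tail $\{a_{k+1}, a_{k+2}, \dots\}$. For each $i>k$, the intersection $a_i\N \cap \{n : n \equiv 1 \pmod{L}\}$ is empty if $\gcd(a_i, L) > 1$, and is otherwise a single residue class modulo $a_iL$, of density $1/(a_iL)$. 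Summing over $i > k$ yields $S/L$ as an upper bound for the density of the bad intersection, so
\[
\underline{\ds}(\N \setminus \M(\A)) \;\geq\; \frac{1 - S}{L} \;>\; 0.
\]
Combined with part (i), this gives $\ds(\M(\A)) < 1$.

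The main obstacle is the second part, where the right witness class must be identified; the residue $1 \pmod{L}$ is the natural choice precisely because the hypothesis $1 \notin \A$ guarantees that no element of $\A$ beneath $L$ can divide it. Once the witness is fixed, the crude tail estimate afforded by convergence of $\sum_{a \in \A} 1/a$ takes care of the rest.
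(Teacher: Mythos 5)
Your argument is correct and self-contained, whereas the paper only sketches this lemma by citing Erd\H{o}s (1934) for the existence of $\ds(\M(\A))$ and the Heilbronn--Rohrbach inequality $\ds(\M(\A)) \leq 1 - \prod_{a \in \A}(1 - 1/a)$ for the strict bound. Your first half reproves Erd\H{o}s's result from scratch: the decomposition $\A_i = a_i\N \setminus \bigcup_{j<i} a_j\N$, the observation that each $\A_i$ is a finite Boolean combination of progressions, the tail bound $\overline{\ds}(\bigcup_{i>k}\A_i) \leq \sum_{i>k} 1/a_i$, and then Lemma~\ref{lem:countable}. This is the natural way to package the 1934 argument in the language the paper has already set up. Your second half replaces Heilbronn--Rohrbach with a cruder but elementary estimate: the residue class $1 \pmod{L}$ with $L = \lcm(a_1,\dots,a_k)$ misses $a_1,\dots,a_k$ entirely (here $1 \notin \A$ is used exactly once, to get $a_i \geq 2$ and hence $a_i \nmid n$ when $a_i \mid L$ and $n \equiv 1 \pmod L$), and the tail $\sum_{i>k} 1/a_i < 1$ controls what the later $a_i$ can carve out of that class, since each $a_i\N \cap \{n \equiv 1 \pmod L\}$ is either empty or a single class mod $a_iL$. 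What you lose relative to Heilbronn--Rohrbach is the explicit product bound on $\ds(\M(\A))$; what you gain is a proof that uses nothing beyond CRT and Lemma~\ref{lem:countable}, which is entirely appropriate since the lemma only asserts $\ds(\M(\A)) < 1$ and not a quantitative gap.
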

\begin{proof}[Proof (sketch)] The existence of $\ds(\M(\A))$ for thin sets $\A$ is due to Erd\H{o}s \cite{erdos34}. It follows from an inequality of Heilbronn \cite{heilbronn37} and Rohrbach \cite{rohrbach37} that whenever $\ds(\M(\A))$ exists,
\[ \ds(\M(\A)) \leq 1- \prod_{a \in \A}\left(1-\frac{1}{a}\right); \]
the second assertion of the lemma is now immediate. For more context for these results, one can consult \cite[Chapter 0]{hall96}.\end{proof}

According to the first half of Lemma \ref{lem:basicdensity}, the set of $n$ possessing at least one divisor from a thin set $\A$ has an asymptotic density. The next lemma allows us to draw a similar conclusion when we prescribe exactly which members $d$ of $\A$ are divisors of $n$. For technical reasons which will emerge later, we formulate this result so as to allow us to also prescribe which of the quotients $n/d$ are squarefree.

\begin{lem}\label{thin_subset_lemma}
Let $\A$ be a thin set of positive integers. If $\Tt \subset \Ss \subset \A$, where $\Ss$ is finite, then the set of positive integers $n$ for which both
	\begin{enumerate}
	\item $\Ss = \{d \in \A: d \mid n\}$, and
	\item $\Tt = {\{{d \in \Ss} : d \mid n,\; \mu(n/d) \neq 0\}}$,
	\end{enumerate}
	has an asymptotic density.
\end{lem}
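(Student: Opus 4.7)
Let $L := \lcm(\Ss)$ and let $D$ denote the set described in the lemma. Given (1), every $d \in \Ss$ automatically divides $n$, so the clause ``$d \mid n$'' in (2) is redundant, and (2) simplifies to $\Tt = \{d \in \Ss : \mu(n/d) \neq 0\}$. The plan is to apply M\"obius inversion on the (finite) Boolean lattice of subsets of $\Ss$ to obtain
\[ \1_D(n) \;=\; \sum_{\Tt \subseteq \Uu \subseteq \Ss} (-1)^{|\Uu \setminus \Tt|}\, \1_{C_\Uu}(n), \]
where $C_\Uu$ is the set of $n$ satisfying (1) and having $\mu(n/d) \neq 0$ for every $d \in \Uu$ (no constraint on the remaining elements of $\Ss$). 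Since $\Ss$ is finite, this is a finite signed combination of indicators, so it suffices to show each $C_\Uu$ possesses an asymptotic density.

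Next, I would translate $n \in C_\Uu$ into a condition on $m := n/L$. Condition (1) is equivalent to $L \mid n$ together with $d_a \nmid m$ for every $a \in \A \setminus \Ss$, where $d_a := a/\gcd(a,L)$. Setting $e_p := \min_{d \in \Uu} v_p(d)$ (with $\min\emptyset := +\infty$), the requirement that $\mu(n/d) \neq 0$ for all $d \in \Uu$ becomes $v_p(n) \leq e_p + 1$ for every prime $p$; using $v_p(n) = v_p(L) + v_p(m)$ together with the bound $0 \leq e_p \leq v_p(L)$ (the latter because every $d \in \Uu$ divides $L$), one finds this is either infeasible (so $C_\Uu = \emptyset$) or reduces to: $p \nmid m$ for primes in some finite set $P_1 \subseteq \{p : p \mid L\}$, and $p^2 \nmid m$ for all other primes. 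Equivalently, $m$ is squarefree and coprime to $P_1$ (the squarefree condition being vacuous if $\Uu = \emptyset$).

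These combined conditions on $m$ read $m \notin \M(\curly{B})$, where
\[ \curly{B} \;:=\; \{p^2 : p \text{ prime}\} \,\cup\, P_1 \,\cup\, \{d_a : a \in \A \setminus \Ss\} \]
(omitting the first two terms when $\Uu = \emptyset$). The set $\curly{B}$ is thin: $\sum_p p^{-2}$ converges, $P_1$ is finite, and $\sum_a 1/d_a \leq L \sum_{a \in \A \setminus \Ss} 1/a < \infty$ because $\A$ is thin. Lemma \ref{lem:basicdensity} then provides an asymptotic density for $\M(\curly{B})$, hence for the set of valid $m$, and consequently for $C_\Uu = \{Lm : m \notin \M(\curly{B})\}$. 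Substituting back into the inclusion-exclusion identity yields the lemma. I expect the main obstacle to be the prime-by-prime bookkeeping in the middle step --- cleanly merging the infinitely many squarefree constraints at primes $p \nmid L$ with the stronger $p \nmid m$ constraints at primes of $L$ --- and the key observation that keeps the argument alive is that $\{p^2 : p \text{ prime}\}$ is \emph{itself} thin, so the combined forbidden collection $\curly{B}$ remains thin and Lemma \ref{lem:basicdensity} still applies.
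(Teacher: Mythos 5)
Your proposal is correct and follows essentially the same route as the paper: inclusion--exclusion over the sets $\Uu$ with $\Tt\subseteq\Uu\subseteq\Ss$ (the paper realizes this by expanding $\prod_{d\in\Tt}|\mu(n/d)|\prod_{c\in\Ss\setminus\Tt}(1-|\mu(n/c)|)$, which is the same identity), followed by writing $n=Lm$ and recognizing the admissible $m$ as the complement of a set of multiples of a thin set, to which Lemma~\ref{lem:basicdensity} applies. The only cosmetic difference is that the paper encodes the squarefreeness constraints directly via the thin set $\bigcup_{e\in\Uu}\{h^2/\gcd(h^2,L/e):h\geq 2\}$, whereas you resolve them prime by prime into ``$m$ squarefree and coprime to a finite set $P_1$'' before forming the forbidden set $\curly{B}$; both yield the same conclusion.
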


\begin{proof} For each positive integer $n$ satisfying condition (i) above, define $\chi(n)$ by the equation
\begin{equation}\label{eq:indicator}
\chi(n) := \prod_{d \in \Tt} |\mu(n/d)| \prod_{c \in \Ss \setminus \Tt}(1-|\mu(n/c)|);
\end{equation}
for values of $n$ not satisfying condition (i), put $\chi(n)=0$. Then $\chi$ is the indicator function of those $n$ satisfying both (i) and (ii). Moreover, when $n$ satisfies (i), expanding the final product in \eqref{eq:indicator} reveals that
\[ \chi(n) = \sum_{\Tt \subset \Uu \subset \Ss} (-1)^{|\Uu|-|\Tt|} \prod_{e \in \Uu} |\mu(n/e)|. \] So using $'$ to denote a sum restricted to integers $n$ satisfying (i), we find that the count of $n\leq x$ satisfying both (i) and (ii) is given by
\begin{align*}
\sum_{n \leq x} \chi(n) &=  \sum_{\Tt \subset \Uu \subset \Ss} (-1)^{|\Uu|-|\Tt|} \sumprime_{n \leq x} \prod_{e \in \Uu} |\mu(n/e)|,\\
&= \sum_{\Tt \subset \Uu \subset \Ss} (-1)^{|\Uu|-|\Tt|} \#\{n \leq x: \text{$n$ satisfies (i), $n/e$ is squarefree for all $e \in \Uu$}\}.
\end{align*}
Dividing by $x$ and letting $x\to\infty$, it suffices to prove that for each set $\Uu$ with $\Tt\subset \Uu \subset \Ss$, the set
\begin{equation}\label{eq:Vdef} \V := \{n \in \mathbb{N}: \text{$n$ satisfies (i), $n/e$ is squarefree for all $e \in \Uu$}\} \end{equation}
possesses an asymptotic density.

We prove this last claim by showing that belonging to $\V$ amounts to \emph{not} lying in the set of multiples of an appropriately constructed thin set. First, notice that in order for $n$ to satisfy (i), it is necessary that $L \mid n$, where $L:= \lcm[d \in \Ss]$. So each of our candidate values of $n$ can be written in the form $n = Lq$. For condition (i) to hold, one also needs that if $a \in \A\setminus \Ss$, then $a \nmid Lq$; in other words, $a/\gcd(a,L) \nmid q$. The second condition in the definition \eqref{eq:Vdef} of $\V$ requires that for each $e \in \Uu$, the number $Lq/e$ is not divisible by any of $2^2, 3^2, 4^2, \dots$.  Equivalently, $q$  cannot be divisible by any of the numbers $h^2/\gcd(h^2, L/e)$ with $h\geq 2$. So for $n=Lq$ to belong to the set $\V$, it is necessary and sufficient that $q$ \emph{not} belong to the set of multiples of
\[ \{a/\gcd(a,L): a \in \A\setminus \Ss\} \cup \left(\bigcup_{e \in \Uu} \{h^2/\gcd(h^2, L/e): h \geq 2\}\right).\]
Call this set $\Kk$. Then $\Kk$ is a finite union of thin sets and so is thin. (We use here that $\A$ is thin, by hypothesis, and that the sum of the reciprocals of the squares converges.) Hence, $\M(\Kk)$ has a natural density, and
\[
\ds(\V) = \frac{\ds(\N \setminus \M(\Kk))}{L} = \frac{1 - \ds(\M(\Kk))}{L}.
\]
This completes the proof.
\end{proof}

We also need a simple mean-value theorem of Kronecker \cite{kronecker87}.
\begin{lem}\label{lem:kronecker} Let $h(n)$ be an arithmetic function. If $\sum_{n = 1}^{\infty} h(n)/n$ converges, then $h$ has mean value zero.
\end{lem}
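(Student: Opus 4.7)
The plan is to apply Abel summation (partial summation) to convert between the weighted partial sums $S_N := \sum_{n \le N} h(n)/n$ and the ordinary partial sums $\sum_{n \le N} h(n)$. The hypothesis gives $S_N \to S$ for some finite $S$, and the elementary Cesàro lemma then implies $\frac{1}{N}\sum_{k=1}^{N-1} S_k \to S$ as well. Relating the two quantities and subtracting will cancel the limit $S$ and produce the claimed mean value of zero.

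Concretely, setting $a_n := h(n)/n$ and $S_0 := 0$, I would write $h(n) = n a_n = n(S_n - S_{n-1})$ and use the telescoping identity
\[ \sum_{n=1}^{N} h(n) \;=\; \sum_{n=1}^{N} n(S_n - S_{n-1}) \;=\; N\, S_N \;-\; \sum_{n=1}^{N-1} S_n. \]
Dividing by $N$ gives
\[ \frac{1}{N} \sum_{n=1}^{N} h(n) \;=\; S_N \;-\; \frac{1}{N}\sum_{n=1}^{N-1} S_n, \]
and letting $N \to \infty$ the right-hand side tends to $S - S = 0$.

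There is no serious obstacle here; this is the classical Kronecker lemma, and the only bookkeeping point is that the boundary term $N\, S_N$ in the Abel summation identity vanishes after division by $N$ because $(S_N)$ is bounded. The one ingredient worth stating explicitly (since it is what does the real work) is that for any convergent sequence $S_N \to S$, the Cesàro averages $\frac{1}{N}\sum_{k<N} S_k$ also tend to $S$; this is a standard $\varepsilon$-argument splitting the sum at an index beyond which $|S_k - S|$ is small.
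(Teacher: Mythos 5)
Your proof is correct and is essentially the paper's argument: both rest on partial (Abel) summation to pass from the convergent sums $\sum_{n\le x} h(n)/n$ to the partial sums $\sum_{n \le x} h(n)$, followed by the observation that the averages of a convergent sequence tend to the same limit. The only cosmetic difference is that you use the discrete telescoping identity and cite Ces\`aro's lemma, while the paper uses the integral form $H(x) = xG(x) - \int_1^x G(t)\,dt$ and estimates the integral directly.
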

\begin{proof} Let $H(x) := \sum_{n \leq x}h(n)$, and let $G(x) := \sum_{n \leq x} h(n)/n$. By hypothesis, there is a constant $\rho$ so that $G(x) = \rho + o(1)$, as $x\to\infty$. Then as $x\to\infty$,
	\begin{align*} H(x) &= x \cdot G(x) - \int_{1}^{x} G(t)\, dt \\
		& = (\rho x + o(x)) - \int_{1}^{x}(\rho + o(1))\,dt = o(x).
	\end{align*}
Hence, $h$ has mean value zero.
\end{proof}

Our final lemma, needed in the proof of Theorem \ref{thm:main2}, is a generalization of the well-known result that the squarefree numbers have asymptotic density $6/\pi^2$. The proof consists of easy sieving; compare with \cite[pp. 633--635]{landau53}.

\begin{lem}\label{lem:landau} For each natural number $P$, the set of squarefree integers relatively prime to $P$ has asymptotic density
\[ \frac{6}{\pi^2} \prod_{p \mid P} \left(1+\frac{1}{p}\right)^{-1}. \]
\end{lem}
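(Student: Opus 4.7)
The plan is to prove this by a standard Möbius sieve. Let $S(x)$ denote the number of squarefree $n \leq x$ that are coprime to $P$. Using the identity $[n \text{ squarefree}] = \sum_{d^2 \mid n} \mu(d)$ and swapping the order of summation (writing $n = d^2 m$), one obtains
\[ S(x) = \sum_{\substack{d \leq \sqrt{x} \\ \gcd(d,P)=1}} \mu(d) \cdot \#\{m \leq x/d^2 : \gcd(m,P)=1\}, \]
where the restriction $\gcd(d,P)=1$ comes from the requirement that $d^2 m$ be coprime to $P$.

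Next I would handle the inner count by a second application of Möbius inversion, this time on the divisors of $P$: for each real $y \geq 0$,
\[ \#\{m \leq y : \gcd(m,P)=1\} = \sum_{e \mid P} \mu(e) \lfloor y/e \rfloor = \frac{\varphi(P)}{P} y + O(2^{\omega(P)}), \]
where the error is $O_P(1)$ since $P$ is fixed. Substituting this into the expression for $S(x)$ produces a main term
\[ \frac{\varphi(P)}{P} \, x \sum_{\substack{d \leq \sqrt{x} \\ \gcd(d,P)=1}} \frac{\mu(d)}{d^2} \]
and a total error of $O(\sqrt{x} \cdot 2^{\omega(P)}) = O_P(\sqrt{x})$. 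As $x \to \infty$, the truncated sum converges to the Euler product $\prod_{p \nmid P}(1 - p^{-2})$, while the tail contributes $O(1/\sqrt{x})$.

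Finally, I would simplify the resulting expression for the density:
\[ \frac{\varphi(P)}{P} \prod_{p \nmid P}\!\left(1 - \frac{1}{p^2}\right) = \frac{\varphi(P)}{P} \cdot \frac{6/\pi^2}{\prod_{p \mid P}(1 - p^{-2})} = \frac{6}{\pi^2} \prod_{p \mid P} \frac{1 - 1/p}{1 - 1/p^2} = \frac{6}{\pi^2} \prod_{p \mid P}\!\left(1 + \frac{1}{p}\right)^{-1}, \]
which is the claimed density. There is no real obstacle here: the argument is genuinely routine, and the only step requiring any mild care is keeping track of the coprimality condition $\gcd(d,P)=1$ when swapping the orders of summation, together with confirming that the error term is $o(x)$ for $P$ held fixed.
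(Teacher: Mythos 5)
Your proof is correct, and it is precisely the ``easy sieving'' the paper alludes to (the paper does not write out a proof, only points to Landau): sieve by squares via $\sum_{d^2\mid n}\mu(d)$, then sieve the remaining variable by the divisors of $P$, and simplify the resulting Euler products. No issues.
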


\section{Proof of Theorem \ref{thm:main1}}

We split the proof into two parts.

\begin{proof}[Proof that if $g$ has thin support, then $\supp(f)$ has positive density] We begin by showing that the support of $f$ has \emph{some} asymptotic density. We defer showing that this density is positive to the very end of the argument.

Call two elements of $\supp(f)$ equivalent if they share the same set of divisors from $\supp(g)$. Enumerate the equivalence classes as $\A_1, \A_2, \A_3, \dots$. Then each $\A_i$ is the set of natural numbers possessing a prescribed set $\Ss_i$ of divisors from $\supp(g)$. Each $\Ss_i$ is thin (as a subset of the thin set $\supp(g)$), and so Lemma \ref{thin_subset_lemma} shows that each of the sets $\A_i$ possesses a natural density. (Note that the full strength of Lemma \ref{thin_subset_lemma} is not required, since we are only prescribing $\Ss$, not $\Tt$.) Now
\[ \supp(f) = \bigcup_{i} \A_i, \]
and the right-hand union is disjoint. If there are only finitely many classes $\A_i$, then the existence of $\ds(\supp(f))$ follows immediately from finite additivity. So suppose that there are infinitely many $\A_i$. Since only finitely many natural numbers lie below any given bound, it is clear that with
\[ m_k:= \min_{i > k} \left(\max_{d \in \Ss_i} d\right), \quad\text{we have}\quad m_k \to\infty \quad \text{as $k\to\infty$}. \]
Now if $n \in \cup_{i > k}\A_i$, then $\{d \mid n: d \in \supp(g)\} = \Ss_i$ for some $i > k$. Thus, $n$ has a divisor $d \in \supp(g)$ with $d \geq m_k$, and so
\[ \overline{\ds}(\cup_{i > k}\A_i) \leq \sum_{\substack{d \geq m_k \\ d \in \supp(g)}}\frac{1}{d}. \]
As $k\to\infty$, the right-hand side of this inequality tends to zero, because of the thinness of $\supp(g)$. So Lemma \ref{lem:countable} shows that $\ds(\supp(f))=\sum_{i} \ds(\A_i)$. This completes the proof that $\supp(f)$ has an asymptotic density.

We now show that $\ds(\supp(f))>0$. Let $d$ be the smallest member of $\supp(g)$. We claim that a positive proportion of numbers $n$ have $d$ as their only divisor from $\supp(g)$, so that $f(n) = g(d) \neq 0$. To prove the claim, note that if $n=dm$ has a divisor from $\supp(g)$ other than $d$, then $m$ is a multiple of an element from the set $\{e/\gcd(e,d): e\neq d, e \in \supp(g)\}$. That set is thin (since $\supp(g)$ is) and does not contain $1$ (by the minimality of $d$). From Lemma \ref{lem:basicdensity}, we see that the corresponding values of $m$ comprise a set of density $< 1$. Consequently, a positive proportion of multiples $n$ of $d$ have $d$ as their only divisor from $\supp(g)$, as was to be shown.
\end{proof}

\begin{proof}[Proof that if $f$ has thin support, then $\supp(g)$ has positive density] This is similar to the first half. By M\"{o}bius inversion, $g(n) = \sum_{d \mid n} f(d)\mu(n/d)$. Motivated by this, we call two elements $n_1$ and $n_2$ of $\supp(g)$ equivalent if they  share the same set of divisors $d$ from $\supp(f)$ \emph{and} $n_1/d$ and $n_2/d$ are squarefree for the same subset of these $d$. The existence of the density of $\supp(g)$ then follows from an argument entirely analogous to that seen above (but now using the full strength of Lemma \ref{thin_subset_lemma}).
	
Let $d$ be the smallest element of $\supp(f)$. To show that $\ds(\supp(g))>0$, it is enough to show that a positive proportion of $n$ have $d$ as their only divisor from $\supp(f)$ and have $n/d$ squarefree. Writing $n=dm$, we observe that these conditions hold unless $m$ belongs to the set of multiples of $\{e/\gcd(e,d): e\neq d, e \in \supp(f)\} \cup \{h^2: h\geq 2\}$. Lemma \ref{lem:basicdensity} shows that the set of these excluded values of $m$ has density $< 1$, thus completing the proof.
\end{proof}

Theorem \ref{thm:main1} is sharp in the following sense:

\begin{prop}\label{prop:best} Let $Z(x)$ be any increasing function on $[2,\infty)$ that tends to infinity (no matter how slowly). There is a nonzero M\"{o}bius pair $(f,g)$ for which
\[ \sum_{\substack{n \leq x \\ n \in \supp(g)}} \frac{1}{n} < Z(x) \]
for all large $x$, but $\supp(f)$ has asymptotic density zero.\end{prop}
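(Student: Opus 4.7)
The plan is to build $(f,g)$ from a suitable set $P$ of primes. Given $P$, let $g$ be the arithmetic function supported on those integers that are squarefree with every prime factor in $P$, and set $g(n) = \mu(n)$ on that support. Writing $T(n) := \{p \in P : p \mid n\}$, the divisors of $n$ that lie in $\supp(g)$ are exactly the products of subsets of $T(n)$, so
\[
f(n) = \hat{g}(n) = \sum_{S \subseteq T(n)} (-1)^{|S|},
\]
which equals $1$ when $T(n) = \emptyset$ and $0$ otherwise. Thus $(f,g)$ is a nonzero M\"{o}bius pair (indeed $f(1) = g(1) = 1$), and $\supp(f)$ consists precisely of those integers divisible by no prime of $P$.

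Two inequalities reduce the proposition to a statement purely about $P$. First,
\[
\sum_{\substack{n \leq x \\ n \in \supp(g)}}\frac{1}{n} \leq \prod_{\substack{p \in P \\ p \leq x}}\left(1 + \frac{1}{p}\right) \leq \exp\left(\sum_{\substack{p \in P \\ p \leq x}}\frac{1}{p}\right).
\]
Second, for every $y$ the set $\supp(f)$ is contained in the set of integers coprime to all primes of $P \cap [1,y]$, whose density is $\prod_{p \in P,\, p \leq y}(1-1/p)$; this product tends to $0$ with $y$ whenever $\sum_{p \in P} 1/p = \infty$, forcing $\ds(\supp(f)) = 0$ in that case. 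So it suffices to exhibit a set $P$ of primes with $\sum_{p \in P} 1/p = \infty$ and $\sum_{p \in P,\, p \leq x} 1/p \leq \log(Z(x)/2)$ for all large $x$: then the displayed bound gives $\sum_{n \leq x,\, n \in \supp(g)} 1/n \leq Z(x)/2 < Z(x)$ eventually.

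Such a $P$ is produced greedily. Scan the primes in increasing order and admit $p$ to $P$ exactly when doing so preserves the inequality $\sum_{q \in P,\, q \leq p} 1/q \leq \log(Z(p)/2)$. The upper bound then holds by construction. For divergence, suppose for contradiction that $\sum_{p \in P} 1/p$ converges to some finite $S$. Since $\log(Z(p)/2) \to \infty$, we have $S + 1/p \leq \log(Z(p)/2)$ for every sufficiently large prime $p$; the greedy rule then forces $p$ into $P$. Hence $P$ contains every sufficiently large prime, contradicting convergence of $\sum_{p \in P} 1/p$.

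The main (if modest) obstacle is reconciling the divergence of $\sum_{p \in P} 1/p$, which is what kills the density of $\supp(f)$, with the moving upper bound $\log(Z(x)/2)$ imposed on its partial sums; the greedy procedure handles both requirements simultaneously precisely because $Z$ is guaranteed to tend to infinity, no matter how slowly.
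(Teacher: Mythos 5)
Your proof is correct. It follows the same overall strategy as the paper's: construct a set $P$ of primes greedily, balancing divergence of $\sum_{p\in P}1/p$ against the moving bound imposed by $Z$. But your realization is cleaner in a few places. Where the paper takes $g$ completely multiplicative (with $g(p)=-1$ for $p\in P$, $g(p)=0$ otherwise), so that $\supp(f)$ becomes $\{n : p\parallel n\text{ for no }p\in P\}$ with density $\prod_{p\in P}(1-1/p+1/p^2)$, you take $g$ to be $\mu$ restricted to squarefree $P$-smooth integers, which makes $f$ literally the indicator of integers coprime to all of $P$. That simplification lets you bound $\overline{\ds}(\supp(f))$ by the elementary $\prod_{p\in P,\,p\le y}(1-1/p)$ and send $y\to\infty$, avoiding the sieve computation of an exact density that the paper invokes. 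Your greedy criterion (bounding the partial sums $\sum 1/p$ by $\log(Z/2)$) and the paper's (bounding the partial products $\prod(1+1/(p-1))$ by $Z$) play identical roles, and your divergence argument by contradiction is the same idea as the paper's observation that the greedy process must eventually admit every prime. One tiny point worth making explicit: when you say the upper bound ``holds by construction,'' you are implicitly using that $Z$ is increasing, so that the bound at the largest admitted prime $p\le x$ transfers to $x$ itself, and that $\log(Z(x)/2)\ge 0$ once $x$ is large. Both are immediate but should be noted.
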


\begin{proof} Let $\Pp$ be the set of primes constructed greedily by the following process: Start with $\Pp=\emptyset$. Running through the sequence of primes $2, 3, 5, \dots$ consecutively, throw $q$ into $\Pp$ if $\prod_{p \in \Pp\cap[2,q]}(1+\frac{1}{p-1}) < Z(q)$. Since $Z\to\infty$, the greedy nature of the construction guarantees that $\prod_{p \in \Pp}(1+1/(p-1))$ diverges to infinity, so that $\sum_{p \in \Pp} p^{-1}$ diverges. Now let $g$ be the completely multiplicative function with $g(p)=-1$ if $p\in \Pp$, and $g(p)=0$ otherwise. Let $f(n):=\sum_{d \mid n} g(d)$. If $p \parallel n$ where $p \in \Pp$, then $f(n)= f(p) f(n/p) = (1+g(p)) f(n/p) =0$. So if $n \in \supp(f)$, then there is no prime $p \in \Pp$ for which $p \parallel n$. Since $\Pp$ has divergent reciprocal sum, the set of $n$ satisfying this last condition has density
\[ \prod_{p \in \Pp}\left(1-\frac{1}{p} + \frac{1}{p^2}\right) = 0, \] by an elementary sieve argument (cf. \cite[Lemma 8.13, p. 260]{pollack09}). So $\supp(f)$ has density $0$. On the other hand, if $g(n) \neq 0$, then every prime factor of $n$ belongs to $\Pp$. So for large $x$,
\[ \sum_{\substack{n\leq x\\ n \in \supp(g)}} \frac{1}{n} \leq \prod_{\substack{p \leq x \\ p \in \Pp}}\left(1+\frac{1}{p}+\frac{1}{p^2} + \dots\right) =
\prod_{\substack{p \leq x \\ p \in \Pp}}\left(1+\frac{1}{p-1}\right) < Z(x).\]
This completes the proof of the proposition.\end{proof}
It is not hard to prove the analogue of Proposition \ref{prop:best} with the roles of $f$ and $g$ reversed; we leave this to the reader.

\section{Proof of Theorem \ref{thm:main2}}

We first suppose that $\sum |g(n)|/n < \infty$ and show that $|f|$ possesses a nonzero mean value. At the end of this section, we discuss the changes necessary to reverse the roles of $f$ and $g$. For each $y \geq 1$, we define an arithmetic function $f_y$ by
\[ f_y(n):= \sum_{\substack{d \mid n \\ d \leq y}} g(d); \]
analogously, we define $g_y$ by
\[ g_y(n):= \sum_{\substack{d \mid n \\ d \leq y}} \mu(n/d) f(d). \]

\begin{lem}\label{lem:lambday} Assume that $\sum_{n=1}^{\infty} \frac{|g(n)|}{n} < \infty$. Then:
\begin{enumerate}
\item For every $y$, the function $|f_y|$ possesses a finite mean value, say $\lambda_y$.
\item The constants $\lambda_y$ tend to a finite limit $\lambda$ as $y\to\infty$.
\item The mean value of $|f|$ is $\lambda$.
\end{enumerate}
\end{lem}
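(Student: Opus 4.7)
The proof splits naturally into the three listed parts, and I would treat them in order since each builds on the previous.

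For (i), the key observation is that $f_y(n)$ depends on $n$ only through the set $\{d\le y: d\mid n\}$. Since each divisibility condition $d \mid n$ (with $d\le y$) is periodic in $n$ with period $d$, the function $f_y$ is periodic with period $L:=\lcm\{1,2,\dots,\lfloor y\rfloor\}$. Any periodic arithmetic function has a mean value equal to the average over a single period, so $|f_y|$ possesses the finite mean value $\lambda_y = \frac{1}{L}\sum_{n=1}^{L}|f_y(n)|$.

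For (ii), I plan to show $(\lambda_y)$ is Cauchy by a swap-of-summation estimate. For $1\le y<y'$ and any $x\ge 1$, the reverse triangle inequality and interchanging the order of summation give
\[ \frac{1}{x}\sum_{n\le x}\bigl||f_{y'}(n)|-|f_y(n)|\bigr| \;\le\; \frac{1}{x}\sum_{n\le x}\sum_{\substack{d\mid n\\ y<d\le y'}}|g(d)| \;=\; \frac{1}{x}\sum_{y<d\le y'}|g(d)|\lfloor x/d\rfloor \;\le\; \sum_{y<d\le y'}\frac{|g(d)|}{d}. \]
Sending $x\to\infty$ and invoking (i) yields $|\lambda_{y'}-\lambda_y|\le\sum_{y<d\le y'}|g(d)|/d$. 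The hypothesis $\sum_{n=1}^{\infty}|g(n)|/n<\infty$ then forces $(\lambda_y)$ to be Cauchy and so convergent to some finite $\lambda$.

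For (iii), I apply the same estimate but now with no upper cutoff on $d$, obtaining for every $y$ and every $x\ge 1$ the uniform bound
\[ \left|\frac{1}{x}\sum_{n\le x}|f(n)|-\frac{1}{x}\sum_{n\le x}|f_y(n)|\right| \;\le\; \sum_{d>y}\frac{|g(d)|}{d} \;=:\; \varepsilon_y. \]
Combined with (i), this sandwiches both $\liminf_{x\to\infty}\frac{1}{x}\sum_{n\le x}|f(n)|$ and the corresponding $\limsup$ between $\lambda_y-\varepsilon_y$ and $\lambda_y+\varepsilon_y$. Letting $y\to\infty$ drives $\lambda_y\to\lambda$ (by (ii)) and $\varepsilon_y\to 0$ (by absolute convergence), so the mean value of $|f|$ exists and equals $\lambda$.

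There is no real obstacle in this lemma; the only thing to be careful about is that the tail bound $\sum_{d>y}|g(d)|/d$ controls the discrepancy between $|f|$ and $|f_y|$ \emph{uniformly} in $x$, which is what makes the sandwich argument in (iii) work. The deeper point—that $\lambda$ is actually nonzero—is not part of this lemma and will have to be argued separately in the main proof of Theorem \ref{thm:main2}, presumably using Lemma \ref{lem:landau}.
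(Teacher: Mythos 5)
Your proof is correct, and parts (ii) and (iii) coincide with the paper's argument (Cauchy net via a swap of summation, then a uniform tail bound and a sandwich). Part (i) is where you diverge: you observe that $f_y$ depends on $n$ only through $\{d\le y: d\mid n\}$ and so is periodic with period $L=\lcm\{1,\dots,\lfloor y\rfloor\}$, whence its mean value is the average over one period. The paper instead decomposes $\sum_{n\le x}|f_y(n)|$ according to the exact subset $\Ss=\{d\le y: d\mid n\}$ and invokes Lemma~\ref{thin_subset_lemma} (which in turn rests on Erd\H{o}s's theorem that thin sets have well-defined sets-of-multiples densities). Your periodicity argument is shorter and more elementary for this direction. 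What the paper's heavier machinery buys is uniformity with the reversed-roles case: when $f$ and $g$ are swapped, the relevant truncation is $g_y(n)=\sum_{d\mid n,\,d\le y}\mu(n/d)f(d)$, and the factor $\mu(n/d)$ destroys periodicity, since whether $n/d$ is squarefree is not a periodic condition in $n$. There one genuinely needs the full strength of Lemma~\ref{thin_subset_lemma} (prescribing both $\Ss$ and the subset $\Tt$ of divisors with squarefree cofactor), so the paper runs the same argument in both directions, while your approach would need to be replaced by the paper's when treating $|g_y|$.
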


\begin{proof} Let $\A$ be the (finite, so also thin) set of natural numbers not exceeding $y$.  Since $f_y(n)$ depends only on the set of elements of $\A$ dividing $n$, we can write
\[ \sum_{n \leq x} |f_y(n)| = \sum_{\Ss \subset \A} \left|\sum_{d \in \Ss} g(d)\right| \sum_{\substack{n \leq x \\ \Ss = \{d\mid n:~d \in \A\}}} 1. \]
Dividing by $x$ and letting $x\to\infty$, we obtain the existence of the mean values $\lambda_y$ from Lemma \ref{thin_subset_lemma}. (We need only the half of that lemma concerned with prescribing $\Ss$.) This completes the proof of (i). To see that the $\lambda_y$ converge to a finite limit $\lambda$, notice that if $y_0 < y_1$,
\begin{align}\notag \left|\sum_{n \leq x} |f_{y_1}(n)| - \sum_{n \leq x} |f_{y_0}(n)|\right| &\leq \sum_{n \leq x} \left|f_{y_1}(n)-f_{y_0}(n)\right| \\
&\leq \sum_{n \leq x} \sum_{\substack{y_0 < d \leq y_1 \\ d \mid n}} |g(d)| \notag \\
&\leq x\sum_{\substack{d > y_0}} \frac{|g(d)|}{d}.\notag\end{align}
Dividing by $x$ and letting $x\to\infty$ shows that
\[ |\lambda_{y_1}-\lambda_{y_0}| \leq \sum_{d > y_0} \frac{|g(d)|}{d}. \]
The right-hand side tends to zero as $y_0\to\infty$. Thus, $\{\lambda_y\}$ is a Cauchy net of real numbers, and hence convergent. So we have (ii). The same arguments used to prove (ii) show that $\left|\sum_{n \leq x}|f(n)| - \sum_{n \leq x}|f_{y_0}(n)|\right| \leq x \sum_{d > y_0} |g(d)|/d$. Thus,
\[ \lambda_{y_0} - \sum_{d > y_0} \frac{|g(d)|}{d} \leq \liminf_{x\to\infty} \left(\frac{1}{x}\sum_{n\leq x}|f(n)|\right) \leq \limsup_{x\to\infty} \left(\frac{1}{x}\sum_{n\leq x}|f(n)|\right) \leq \lambda_{y_0} + \sum_{d > y_0} \frac{|g(d)|}{d}. \] Now letting $y_0\to\infty$ gives (iii).
\end{proof}

\begin{proof}[Proof that $|f|$ has a nonzero mean value, assuming $\sum_{n=1}^{\infty}\frac{|g(n)|}{n}< \infty$] It remains only to show that the number $\lambda$ from Lemma \ref{lem:lambday} is positive. Let $d$ be the smallest element of the support of $g$. We consider the partial sums of $|f|$ restricted to $n$ of the form $dm$, where every prime dividing $m$ exceeds a large but fixed real parameter $y$. Using $'$ to denote this restriction, we find that
\begin{align}
\notag\sumprime_{n \leq x} |f(n)| &\geq \sumprime_{n \leq x} \left(|g(d)| - \sum_{\substack{e \mid n \\ e \neq d}}|g(e)|\right)\\ &= \left(|g(d)|\sumprime_{n \leq x} 1\right) - \sumprime_{n \leq x}\sum_{\substack{e \mid n \\ e\neq d}} |g(e)|. \label{eq:est0} \end{align}
We proceed to estimate the remaining sums. If we put $P:=\prod_{p \leq y}p$, then
\begin{align}\notag \sumprime_{n \leq x}1 &= \sum_{\substack{m \leq x/d \\ \gcd(m,P)=1}} 1 \\&= \frac{x}{d}\prod_{p\leq y}\left(1-\frac{1}{p}\right) + O(2^{\pi(y)}), \label{eq:est1}\end{align}
where the last step follows by inclusion-exclusion (e.g., see \cite[Theorem 3.1, p. 76]{MV07}). Observe that if $n=dm$, where all of the prime factors of $m$ exceed $y$, then every divisor $e$ of $n$ belongs to $[1,d]$ or has a prime divisor $> y$. So if $e \neq d$, then the choice of $d$ forces $g(e)=0$ or $e > y$. Hence (applying inclusion-exclusion once again),
\begin{align}\notag  \sumprime_{n \leq x}\sum_{\substack{e \mid n \\ e\neq d}} |g(e)| &\leq \sum_{e > y}|g(e)| \sumprime_{\substack{n \leq x\\ e\mid n}}1 \leq \sum_{e > y}|g(e)| \sum_{\substack{m \leq x/\lcm[d,e] \\ p \mid m \Rightarrow p > y}}1 \\&\leq x \prod_{p \leq y} \left(1-\frac{1}{p}\right)\left(\sum_{e > y} \frac{|g(e)|}{\lcm[d,e]}\right) + O\left(2^{\pi(y)} \sum_{y<e \leq x} |g(e)|\right). \label{eq:est2}
\end{align}
Since $\sum_{n \geq 1} \frac{|g(n)|}{n} < \infty$, Lemma \ref{lem:kronecker} guarantees that the final error term in \eqref{eq:est2} is $o(x)$, as $x\to\infty$. Now we substitute \eqref{eq:est2} and \eqref{eq:est1} back into \eqref{eq:est0}, divide by $x$, and let $x\to\infty$ to find that
\begin{align}\notag \liminf_{x\to\infty} \frac{1}{x} \sumprime_{n \leq x} |f(n)| &\geq  \left(\frac{|g(d)|}{d} - \sum_{e > y} \frac{|g(e)|}{\lcm[d,e]}\right) \prod_{p \leq y} \left(1-\frac{1}{p}\right) \\ &\geq
\left(\frac{|g(d)|}{d} - \sum_{e > y} \frac{|g(e)|}{e}\right) \prod_{p \leq y} \left(1-\frac{1}{p}\right).\label{eq:tocompare}\end{align}
But if $y$ is fixed sufficiently large, then this last expression is positive. Since $\sum_{n \leq x}|f(n)| \geq \sum'_{n\leq x}|f(n)|$, it must be that the mean value $\lambda$ of $|f|$ is positive.
\end{proof}

We now consider the effect of swapping $f$ and $g$. That the mean value of $|g|$ exists if $\sum_{n=1}^{\infty}\frac{|f(n)|}{n}< \infty$ follows by the same arguments used to prove Lemma \ref{lem:lambday}, after swapping $f$ and $g$. There is only one substantial change necessary: The value of $f_y(n)$ depends both on the set of divisors $d$ of $n$ not exceeding $y$ \emph{and} on the subset of those $d$ for which $n/d$ is squarefree. So we must appeal to the full force of Lemma \ref{thin_subset_lemma}. We leave the remaining details to the reader.

Finally, we show that the mean value of $|g|$ is positive.

\begin{proof}[Proof that $|g|$ has a nonzero mean value, assuming $\sum_{n=1}^{\infty}\frac{|f(n)|}{n}< \infty$] We let $d$ be the least element of the support of $f$. We let $y$ denote a large but fixed real parameter, and we use $'$ with the same meaning as before. The reasoning that led us to \eqref{eq:est0} now shows that
\begin{equation}\label{eq:est01} \sumprime_{n\leq x}|g(n)| \geq |f(d)| \sumprime_{n \leq x} |\mu(n/d)| - \sumprime_{n \leq x} \sum_{\substack{e \mid n \\ e \neq d}} |f(e)|. \end{equation}
Following our earlier proof, we obtain our previous upper bound \eqref{eq:est2} for the double sum here, except now with $g$ replaced by $f$. Appealing again to Lemma \ref{lem:kronecker} and using that $\lcm[d,e]\geq e$, we thus see that
\begin{equation}\label{eq:est02} \sumprime_{n \leq x} \sum_{\substack{e \mid n \\ e \neq d}} |f(e)| \leq x \prod_{p \leq y}\left(1-\frac{1}{p}\right) \sum_{e>y}\frac{|f(e)|}{e}  + o(x), \end{equation}
as $x\to\infty$. On the other hand, one has as $x\to\infty$ that
\begin{align}\notag \sumprime_{n \leq x}|\mu(n/d)| &= \sum_{\substack{m \leq x/d \\ m \text{ squarefree}\\ p \mid m \Rightarrow p > y}} 1 \\
&\sim \frac{6}{\pi^2} \cdot \frac{x}{d} \prod_{p \leq y} \left(1 + \frac{1}{p}\right)^{-1},\label{eq:est03}\end{align}
by Lemma \ref{lem:landau} with $P:= \prod_{p \leq y}p$. Substituting \eqref{eq:est03} and \eqref{eq:est02} back into \eqref{eq:est01}, dividing by $x$, and letting $x\to\infty$, we find that
\begin{align*} \liminf_{x\to\infty} \frac{1}{x}\sumprime_{n \leq x}|g(n)| &\geq \frac{6}{\pi^2} \cdot \frac{|f(d)|}{d} \prod_{p \leq y} \left(1 + \frac{1}{p}\right)^{-1} - \prod_{p \leq y}\left(1-\frac{1}{p}\right) \sum_{e> y} \frac{|f(e)|}{e}\\
&\geq \left(\prod_{p \leq y}\left(1-\frac{1}{p}\right)\right) \left(\frac{6}{\pi^2} \cdot \frac{|f(d)|}{d} - \sum_{e> y} \frac{|f(e)|}{e}\right).
\end{align*}
This final expression is positive if $y$ is fixed sufficiently large. So the mean value of $|g|$ must be positive.
\end{proof}

\begin{rmk} Suppose that $(f,g)$ is a M\"{o}bius pair with $\sum_{n \geq 1}|g(n)|/n < \infty$. We showed above that the mean value of $|f|$ must be positive. That proof in fact shows that if every element in $\supp(g)$ is at least $d$, and $y\geq 1$, then
\[ \liminf_{x\to\infty}\frac{\sumprime_{n \leq x}|f(n)|}{\sumprime_{n \leq x}1} \geq |g(d)|-d\sum_{e > y}\frac{|g(e)|}{e}. \]
Here, as before, $\sum'$ denotes a sum restricted to integers $n$ of the form $dm$, where each prime factor of $m$ exceeds $y$. (Compare the asserted inequality with \eqref{eq:tocompare}.) A completely analogous argument shows that under the same hypotheses,
\begin{equation}\label{eq:limsupupper} \limsup_{x\to\infty}\frac{\sumprime_{n \leq x}|f(n)|}{\sumprime_{n \leq x}1} \leq |g(d)|+d\sum_{e > y}\frac{|g(e)|}{e}. \end{equation}
Here is an amusing application of \eqref{eq:limsupupper}, in the spirit of \cite{pollack11}. Suppose for the sake of contradiction that the sum of the reciprocals of the prime numbers converges. Take $g$ to be the characteristic function of the primes, take $d=1$, and take $y$ large enough that $\sum_{p > y}\frac{1}{p} < 1$. Since $f(n) = \sum_{p \mid n}1$ counts the number of distinct primes dividing $n$, we have $f(n) \geq 1$ for all $n > 1$, and so the left-hand side of \eqref{eq:limsupupper} is at least $1$. But the right hand side is smaller than $1$, a contradiction!
\end{rmk}

\section{Proof of Theorem \ref{thm:prescribed}}
\begin{proof} We first deal with the case when $\beta < 1$. Fix disjoint sets of primes $\Pp_0$ and $\Qq_0$ with $\prod_{p \in \Pp_0} (1-1/p) =\prod_{p \in \Qq_0} (1-1/p) = 0$. We can find a subset $\Pp \subset \Pp_0$ so that
\[ \prod_{p \in \Pp} (1 - 1/p) = \alpha, \quad\text{where the minimum of $\Pp$ is so large that}\quad \prod_{p \in \Pp} (1 - 1/p^2) \geq \beta.\]
We can then select a set $\Qq \subset \Qq_0$ so that $\prod_{p \in \Qq} (1 - 1/p) = \beta \prod_{p \in \Pp} (1 - 1/p^2)^{-1}$.
Define $g$ as the multiplicative arithmetic function whose values at prime powers are given by
\[
g(p^e) := \begin{cases}
-1 & \mbox{ if } p \in \Pp \mbox{ and } e = 1, \\
\phantom{+}0 & \mbox{ if } p \in \Pp \mbox{ and } e > 1, \\
\phantom{+}0 & \mbox{ if } p \in \Qq,\\
+1 & \mbox{ if } p \notin \Pp \cup \Qq.
\end{cases}
\]
Then the function $f(n):=\sum_{d\mid n}g(d)$ is multiplicative, and after computing $f$ at prime powers, we see that $\supp(f)$ consists of all the positive integers without prime factors from $\Pp$. Now an easy sieve argument (compare with \cite[Corollary 6.3, p. 173]{pollack09}) shows that
\[
\ds(\supp(f)) = \prod_{p \in \Pp}\left(1-\frac1{p}\right) = \alpha.
\]
Moreover, the support of $g$ consists of those positive integers without prime factors in $\Qq$, and which
are not divisible by any $p^2$ with $p \in \Pp$. Another simple sieve argument now shows that
\[
\ds(\supp(g)) = \prod_{p \in \Qq}\left(1-\frac1{p}\right)\prod_{p \in \Pp}\left(1-\frac1{p^2}\right) = \beta.
\]
This completes the proof when $\beta < 1$.

Consider now the case in which $\beta = 1$. This time, we select $\Pp$ as any set of prime numbers with
$\prod_{p \in \Pp}(1-1/p+1/p^2) = \alpha$; this is possible since
$\prod_{p} (1-1/p+1/p^2) = 0$. Define $g$ as the multiplicative arithmetic function satisfying
\[
g(p^e) := \begin{cases}
-1 & \mbox{ if } p \in \Pp \mbox{ and } e = 1, \\
+1 & \mbox{ if } p \in \Pp \mbox{ and } e > 1, \\
+1 & \mbox{ if } p \notin \Pp.
\end{cases}
\]
Let $f(n):= \sum_{d \mid n} g(d)$. Then $f$ is multiplicative, and computing its values at prime powers, we find that $f(n) = 0$ if and only if $p \parallel n$ for some $p \in \Pp$. Again, a sieve argument gives
\[
\ds(\supp(f)) = \prod_{p \in \Pp}\left(1-\frac1{p}+\frac1{p^2}\right) = \alpha .
\]
Since $g$ is always nonzero, we also have $\ds(\supp(g)) = 1 = \beta$.\end{proof}

\section{Concluding remarks}
Call a class of subsets of $\N$ \emph{exclusive} if for every nonzero M\"{o}bius pair $(f,g)$, at least one of the sets $\supp(f)$ and $\supp(g)$ falls outside of the class. As remarked in the introduction, our Theorem \ref{thm:main1} shows that the thin sets comprise an exclusive class.

We can do a little better than this. Call $\A \subset \N$ \emph{evaporating} if as $T\to\infty$, the upper density of the set of $n$ with a divisor in $\A \cap [T,\infty)$ tends to zero. It is easy to prove that every thin set is evaporating. However, there are evaporating sets that are not thin; an interesting example, due to Erd\H{o}s and Wagstaff \cite[Theorem 2]{EW80}, is the set of shifted primes $\{p-1\}$. We can show that the evaporating sets form an exclusive class. In fact, our Theorem \ref{thm:main1} remains true with the word ``thin'' replaced by ``evaporating''. The proof requires relatively minor modifications; the most significant of these is the replacement of Lemma \ref{lem:basicdensity} with its analogue for evaporating sets (see, e.g., \cite[Lemma 2]{PS88}).

It would be interesting to know other classes of sets that are also exclusive. The following question seems attractive and difficult.
\begin{question} For each $\delta > 0$, consider the class $\mathfrak{C}(\delta)$ of subsets $\A \subset \N$ for which
\[ \#\{n \leq x: n \in \A\} \leq C_\A \frac{x}{(\log{x})^{\delta}} \quad \text{for some real $C_\A$, and all $x\geq 2$}. \]
Is this class exclusive?
\end{question}

From Theorem \ref{thm:main1} and partial summation, $\mathfrak{C}(\delta)$ is exclusive for $\delta > 1$. Theorem \ref{thm:prescribed} shows that $\supp(f)$ and $\supp(g)$ can both have density zero; taking $\Pp = \Pp_0 = \{p \equiv 1\pmod{3}\}$ and $\Qq=\Qq_0 = \{p\equiv -1\pmod{3}\}$ in the proof of that theorem, one can show that $\mathfrak{C}(\delta)$ is not exclusive for $\delta \leq \frac12$. What about the range $\frac12 < \delta \leq 1$?

\section*{Acknowledgements} We thank Enrique Trevi\~no for suggestions that improved the readability of the paper.

\providecommand{\bysame}{\leavevmode\hbox to3em{\hrulefill}\thinspace}
\providecommand{\MR}{\relax\ifhmode\unskip\space\fi MR }
% \MRhref is called by the amsart/book/proc definition of \MR.
\providecommand{\MRhref}[2]{%
  \href{http://www.ams.org/mathscinet-getitem?mr=#1}{#2}
}
\providecommand{\href}[2]{#2}

\end{document}